\newcommand{\nc}{\newcommand}
\nc{\fg}{\mathfrak{f} } \nc{\vg}{\mathfrak{v} } \nc{\wg}{\mathfrak{w} }
\nc{\zg}{\mathfrak{z} } \nc{\ngo}{\mathfrak{n} } \nc{\kg}{\mathfrak{k} }
\nc{\mg}{\mathfrak{m} } \nc{\bg}{\mathfrak{b} } \nc{\ggo}{\mathfrak{g} }
\nc{\ggob}{\overline{\mathfrak{g}} } \nc{\sog}{\mathfrak{so} }
\nc{\sug}{\mathfrak{su} } \nc{\spg}{\mathfrak{sp} } \nc{\slg}{\mathfrak{sl} }
\nc{\glg}{\mathfrak{gl} } \nc{\cg}{\mathfrak{c} } \nc{\rg}{\mathfrak{r} }
\nc{\hg}{\mathfrak{h} } \nc{\tg}{\mathfrak{t} } \nc{\ug}{\mathfrak{u} }
\nc{\dg}{\mathfrak{d} } \nc{\ag}{\mathfrak{a} } \nc{\pg}{\mathfrak{p} }
\nc{\sg}{\mathfrak{s} } \nc{\affg}{\mathfrak{aff} } \nc{\qg}{\mathfrak{q} }
\nc{\pca}{\mathcal{P}} \nc{\nca}{\mathcal{N}} \nc{\lca}{\mathcal{L}}
\nc{\oca}{\mathcal{O}} \nc{\mca}{\mathcal{M}} \nc{\tca}{\mathcal{T}}
\nc{\aca}{\mathcal{A}} \nc{\cca}{\mathcal{C}} \nc{\gca}{\mathcal{G}}
\nc{\sca}{\mathcal{S}} \nc{\hca}{\mathcal{H}} \nc{\bca}{\mathcal{B}}
\nc{\dca}{\mathcal{D}} \nc{\val}{\operatorname{val}}
\nc{\vp}{\varphi} \nc{\ddt}{\frac{d}{dt}} \nc{\dds}{\frac{d}{ds}}
\nc{\dpar}{\frac{\partial}{\partial t}} \nc{\im}{\mathrm{i}}
\nc{\SO}{\mathrm{SO}} \nc{\Spe}{\mathrm{Sp}} \nc{\Sl}{\mathrm{SL}}
\nc{\SU}{\mathrm{SU}} \nc{\Or}{\mathrm{O}} \nc{\U}{\mathrm{U}} \nc{\Gl}{\mathrm{GL}}
\nc{\Se}{\mathrm{S}} \nc{\Cl}{\mathrm{Cl}} \nc{\Spein}{\mathrm{Spin}}
\nc{\Pin}{\mathrm{Pin}} \nc{\G}{\mathrm{GL}_n(\RR)} \nc{\g}{\mathfrak{gl}_n(\RR)}
\nc{\RR}{{\Bbb R}} \nc{\HH}{{\Bbb H}} \nc{\CC}{{\Bbb C}} \nc{\ZZ}{{\Bbb Z}}
\nc{\FF}{{\Bbb F}} \nc{\NN}{{\Bbb N}} \nc{\QQ}{{\Bbb Q}} \nc{\PP}{{\Bbb P}} \nc{\OO}{{\Bbb O}}
\nc{\vs}{\vspace{.2cm}} \nc{\vsp}{\vspace{1cm}} \nc{\ip}{\langle\cdot,\cdot\rangle}
\nc{\ipp}{(\cdot,\cdot)} \nc{\la}{\langle} \nc{\ra}{\rangle} \nc{\unm}{\frac{1}{2}}
\nc{\unc}{\frac{1}{4}} \nc{\und}{\frac{1}{16}} \nc{\no}{\vs\noindent}
\nc{\lam}{\Lambda^2(\RR^n)^*\otimes\RR^n} \nc{\tangz}{{\rm T}^{\rm Zar}}
\nc{\nor}{{\sf n}}  \nc{\mum}{/\!\!/} \nc{\kir}{/\!\!/\!\!/}
\nc{\Ri}{\tfrac{4\Ric_{\mu}}{||\mu||^2}} \nc{\ds}{\displaystyle}
\nc{\ben}{\begin{enumerate}} \nc{\een}{\end{enumerate}} \nc{\f}{\frac}
\nc{\lb}{[\cdot,\cdot]} \nc{\isn}{\tfrac{1}{||v||^2}}
\nc{\gkp}{(\ggo=\kg\oplus\pg,\ip)} \nc{\ukh}{(\ug=\kg\oplus\hg,\ip)}
\nc{\tgkp}{(\tilde{\ggo}=\kg\oplus\pg,\ip)}
\nc{\wt}{\widetilde}
\nc{\iop}{\mathtt{i}} \nc{\jop}{\mathtt{j}}
\nc{\Hess}{\operatorname{Hess}} \nc{\ad}{\operatorname{ad}}
\nc{\Ad}{\operatorname{Ad}} \nc{\rank}{\operatorname{rank}}
\nc{\Irr}{\operatorname{Irr}} \nc{\End}{\operatorname{End}}
\nc{\Aut}{\operatorname{Aut}} \nc{\Inn}{\operatorname{Inn}}
\nc{\Der}{\operatorname{Der}} \nc{\Ker}{\operatorname{Ker}}
\nc{\Iso}{\operatorname{Iso}} \nc{\Diff}{\operatorname{Diff}}
\nc{\Lie}{\operatorname{L}} \nc{\tr}{\operatorname{tr}} \nc{\dif}{\operatorname{d}}
\nc{\sen}{\operatorname{sen}} \nc{\modu}{\operatorname{mod}}
\nc{\CRic}{\operatorname{PP}} \nc{\Cric}{\operatorname{P}} \nc{\Ricci}{\operatorname{Ric}}
\nc{\sym}{\operatorname{sym}} \nc{\herm}{\operatorname{herm}} \nc{\symac}{\operatorname{sym^{ac}}}
\nc{\symc}{\operatorname{sym^{c}}} \nc{\scalar}{\operatorname{scal}}
\nc{\grad}{\operatorname{grad}} \nc{\ricci}{\operatorname{ric}}
\nc{\Nor}{\operatorname{Norm}}  \nc{\ricc}{\operatorname{Rc^{c}}}
\nc{\Ricc}{\operatorname{Ric^{c}}} \nc{\ricac}{\operatorname{Rc^{ac}}}
\nc{\Ricac}{\operatorname{Ric^{ac}}} \nc{\Riem}{\operatorname{Rm}}
\nc{\riccig}{\operatorname{ric^{\gamma}}} \nc{\Mm}{\operatorname{M}}
\nc{\Bk}{\operatorname{B}} \nc{\En}{\operatorname{E}}
\nc{\Le}{\operatorname{L}} \nc{\tang}{\operatorname{T}}
\nc{\level}{\operatorname{level}} \nc{\rad}{\operatorname{r}}
\nc{\abel}{\operatorname{ab}} \nc{\CH}{\operatorname{CH}}
\nc{\mcc}{\operatorname{mcc}} \nc{\Adj}{\operatorname{Adj}}
\nc{\Order}{\operatorname{O}}  \nc{\inj}{\operatorname{inj}} \nc{\proy}{\operatorname{proy}}
\nc{\vol}{\operatorname{vol}} \nc{\Diag}{\operatorname{Dg}}
\nc{\Spec}{\operatorname{Spec}} \nc{\Ima}{\operatorname{Im}} \nc{\Rea}{\operatorname{Re}}
\nc{\spann}{\operatorname{span}} \nc{\mm}{\operatorname{m}} \nc{\Crit}{\operatorname{Crit}}
\theoremstyle{plain}
\newtheorem{theorem}{Theorem}[section]
\newtheorem{lemma}[theorem]{Lemma}
\theoremstyle{definition}
\theoremstyle{remark}
\title{The Ricci pinching functional on solvmanifolds II}
\author{Jorge Lauret} \author{Cynthia E. Will}
\address{Universidad Nacional de C\'ordoba, FaMAF and CIEM, 5000 C\'ordoba, Argentina}
\email{lauret@famaf.unc.edu.ar}  \email{cwill@famaf.unc.edu.ar}
\thanks{This research was partially supported by grants from FONCYT and SeCyT (Universidad Nacional de C\'ordoba)}
\begin{document}

\maketitle

\begin{abstract}
It is natural to ask whether solvsolitons are global maxima for the Ricci pinching functional 
$F:=\frac{\scalar^2}{|\Ricci|^2}$ on the set of all left-invariant metrics on a given solvable Lie group $S$, as it is to ask whether they are the only global maxima.   A positive answer to both questions was given in a recent paper by the same authors when the Lie algebra $\sg$ of $S$ is either unimodular or has a codimension-one abelian ideal.  In the present paper, we prove that this also holds in the following two more general cases: 1) $\sg$ has a nilradical of codimension-one; 2) the nilradical $\ngo$ of $\sg$ is abelian and the functional $F$ is restricted to the set of metrics such that $\ag\perp\ngo$, where $\sg=\ag\oplus\ngo$ is the orthogonal decomposition with respect to the solvsoliton.  
\end{abstract}


\section{Introduction}

A left-invariant metric on a simply connected solvable Lie group $S$ is called a {\it solvsoliton} when its Ricci operator satisfies 
$$
\Ricci=cI+D, \qquad\mbox{for some} \quad c\in\RR, \quad D\in\Der(\sg), 
$$
where $\sg$ denotes the Lie algebra of $S$ (see \cite{solvsolitons}).  The definition is a neat combination of geometric and algebraic aspects of a Lie group and the following facts explain very well and from different points of view why these metrics are quite distinguished: 

\begin{itemize}
\item {\it Ricci solitons}.  Solvsolitons are all Ricci solitons, they are precisely the left-invariant Ricci solitons such that the Ricci flow evolves by just scaling and pullback by automorphisms (see \cite{solvsolitons} and \cite[Lemma 5.3]{Jbl2}).  Moreover, if S is of real type, then any scalar-curvature normalized Ricci flow solution converges in Cheeger-Gromov topology to a non-flat solvsoliton on a possibly different solvable Lie group, which does not depend on the initial metric (see \cite[Theorem A]{BhmLfn}).

\item {\it Uniqueness}.  On a given $S$, there is at most one solvsoliton up to scaling and pullback by automorphisms of $S$ (see \cite[Section 5]{solvsolitons} and \cite[Corollary 4.3]{BhmLfn}).  

\item {\it Structure}.  If $\ngo$ is the niradical of $\sg$ and $\sg=\ag\oplus\ngo$ is the orthogonal decomposition with respect to a solvsoliton, then $[\ag,\ag]=0$ (which already imposes an algebraic constraint on $\sg$, as the Lie algebra $[e_1,e_3]=e_3$, $[e_2,e_4]=e_4$, $[e_1,e_2]=e_5$  shows) and any $\ad{Y}|_{\ngo}$ must be a semisimple operator, although the strongest and less understood  obstruction is that $\ngo$ has to admit itself a solvsoliton, called a {\it nilsoliton} in the nilpotent case (see \cite[Theorem 4.8]{solvsolitons} and \cite{cruzchica}).  We refer to \cite{Wll,Frn} and the references therein for low-dimensional classification results for solvsolitons.  

\item {\it Maximal symmetry}.  The dimension of the isometry group of a non-flat solvsoliton on $S$ is maximal among all left-invariant metrics on $S$ (see \cite[Corollary C]{BhmLfn}).  A stronger maximality condition holds in the case of a unimodular $S$: the isometry group of a solvsoliton contains all possible isometry groups of left-invariant metrics on $S$ up to conjugation by a diffeomorphism (see \cite[Corollary 1.3]{Jbl3}, and see \cite{GrdJbl} and \cite[Corollary D]{BhmLfn} for the (non-unimodular) Einstein case, where such a diffeomorphism is actually an automorphism of $S$).

\item {\it Ricci-pinched}.  On a given unimodular $S$, solvsolitons are the only global maxima for the Ricci pinching functional 
$$
F:=\frac{\scalar^2}{|\Ricci|^2},
$$ 
restricted to the set of all left-invariant metrics on $S$ (see \cite{einsteinsolv} for nilsolitons and \cite[Theorem 1.2, (v)]{RP} for the general unimodular case).  Note that $F$ is measuring in a sense how far is a metric from being Einstein.  
\end{itemize}

We are concerned in this paper with the last property of solvsolitons above.  Beyond the unimodular case, solvsolitons were proved  in \cite{RP} to be the only global maxima of $F$ for any {\it almost-abelian} $S$ (i.e.\ $\sg$ has a codimension-one abelian ideal).   Our purpose here is to prove that this also holds among the following two much broader classes of solvable Lie groups.   

\begin{theorem}\label{An-thm}
Let $\sg$ be a solvable Lie algebra with nilradical of codimension-one and assume that $S$ admits a solvsoliton $g$.  Then $g$ is a global maximum for the functional $F$ restricted to the set of all left-invariant metrics on $S$.  Moreover, any other global maximum $g'$ is also a solvsoliton (i.e.\ $g'=c\vp^*g$, for some $c>0$ and $\vp\in\Aut(S)$). 
\end{theorem}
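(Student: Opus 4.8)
The plan is to work in the standard picture of varying inner products on the fixed Lie algebra $\sg$ (equivalently, moving brackets on a fixed inner product space), in which the set of left‑invariant metrics on $S$ modulo isometry and scaling becomes a single orbit where $F$ is smooth and $\Aut(S)$‑invariant, and to use the codimension‑one hypothesis to make this orbit very explicit. I may assume $S$ is non‑unimodular, the unimodular case being \cite{RP}; then $\scalar_g<0$, no left‑invariant metric on $S$ is flat, and, writing $\Ricci_g=cI+D$ with $D\in\Der(\sg)$, one has $c<0$ and $|\Ricci_g|^2=c\,\scalar_g$ (see \cite{RP}), so $F(g)=\scalar_g/c$. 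Since $\ngo$ is a characteristic ideal, for any metric $g'$ the subspace $\ag':=\ngo^{\perp_{g'}}$ is intrinsically defined, and the hypothesis $\dim(\sg/\ngo)=1$ forces $\dim\ag'=1$; this one‑dimensionality, hence automatic abelianness of $\ag'$, is exactly what the hypothesis buys, and what is unavailable in case 2) of the abstract. Writing $\ag'=\RR Y'$, $|Y'|_{g'}=1$, the metric $g'$ is then encoded by the pair $(g'|_\ngo,\,A')$ with $A':=\ad_\sg Y'|_\ngo$, where $g'|_\ngo$ ranges over all inner products on $\ngo$ and $A'$ over all derivations of the resulting nilpotent bracket realizing $\sg$ (the last requirement is droppable for upper bounds). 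By \cite[Theorem 4.8]{solvsolitons}, for the solvsoliton $g$ the pair is $(g|_\ngo,A)$ with $(\ngo,g|_\ngo)$ a nilsoliton and $A:=\ad_\sg Y|_\ngo$ normal.

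Next I would convert $F(g')\le F(g)$ into an algebraic inequality. From the standard expression of $\Ricci$ in terms of the bracket, the Killing form and the mean curvature vector $H$, together with $\tr\ad X=0$ on $[\sg,\sg]\subseteq\ngo$ and the vanishing of the Killing form of $\sg$ on $\ngo$, one computes, for an orthonormal basis $\{e_k\}$ of $\ngo$, with $P:=\sym(A')$ and $t:=\tr A'=\tr P$,
\[
\scalar_{g'}=\scalar_{(\ngo,\,g'|_\ngo)}-\tr(P^2)-t^2,\qquad \langle\Ricci_{g'}Y',Y'\rangle=-\tr(P^2),
\]
\[
\Ricci_{g'}|_\ngo=\Ricci_{(\ngo,\,g'|_\ngo)}+\tfrac12[A',(A')^t]-t\,P,\qquad \langle\Ricci_{g'}Y',e_k\rangle=-\tr\bigl(P\,\ad_\ngo e_k\bigr),
\]
whence
\[
|\Ricci_{g'}|^2= \tr(P^2)^2+2\sum_k\tr\bigl(P\,\ad_\ngo e_k\bigr)^2+\bigl|\Ricci_{(\ngo,\,g'|_\ngo)}+\tfrac12[A',(A')^t]-t\,P\bigr|^2 .
\]
Thus $F(g')\le F(g)=\scalar_g/c$ follows from
\[
\tr(P^2)^2+2\sum_k\tr\bigl(P\,\ad_\ngo e_k\bigr)^2+\bigl|\Ricci_{(\ngo,\,g'|_\ngo)}+\tfrac12[A',(A')^t]-t\,P\bigr|^2\ \ge\ \frac{c}{\scalar_g}\bigl(\scalar_{(\ngo,\,g'|_\ngo)}-\tr(P^2)-t^2\bigr)^2,
\]
to be proved over all admissible $(g'|_\ngo,A')$, with equality characterising the solvsoliton.

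To attack this I would first fix $g'|_\ngo$ and $P$ and minimise the left‑hand side over the skew part $N$ of $A'$ (constrained so that $P+N$ is a derivation of the bracket of $g'|_\ngo$). Using $\langle P,[P,N]\rangle=0$, the only $N$‑dependence sits in $\tfrac12[A',(A')^t]=-[P,N]$, giving $|\Ricci_{(\ngo,\,g'|_\ngo)}-[P,N]-tP|^2=|\Ricci_{(\ngo,\,g'|_\ngo)}-tP|^2\mp 2\langle[\Ricci_{(\ngo,\,g'|_\ngo)},P],N\rangle+|[P,N]|^2$. In the abelian‑nilradical (almost‑abelian) case of \cite{RP} the bracket term $[\Ricci_{(\ngo,\,g'|_\ngo)},P]$ vanishes, since $\Ricci_{(\ngo,\,g'|_\ngo)}=0$, and one reduces at once to $N=0$, i.e.\ $A'$ normal; here it does not, and this coupling of the skew part of $A'$ with $\Ricci_{(\ngo,\,g'|_\ngo)}$ and with the off‑diagonal block $\langle\Ricci_{g'}Y',e_k\rangle$ is the main obstacle. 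Once that is settled, the plan is to feed in the nilpotent case \cite{einsteinsolv} — the nilsoliton $g|_\ngo$ is the unique maximum of $\scalar^2/|\Ricci|^2$ among left‑invariant metrics on the nilpotent group with Lie algebra $\ngo$, equivalently $|\Ricci_{(\ngo,\,g'|_\ngo)}|^2\ge \tfrac{c_\ngo}{\scalar_{(\ngo,\,g|_\ngo)}}\scalar_{(\ngo,\,g'|_\ngo)}^2$ with equality iff $g'|_\ngo$ is a nilsoliton — and, expanding the $\ngo$‑norm and bounding the cross term $t\,\langle\Ricci_{(\ngo,\,g'|_\ngo)},P\rangle$ via the commuting of $P$ with the nilsoliton derivation (at the nilsoliton) and the eigenvalue structure of $P$, to reduce the displayed inequality to an elementary one in $\tr(P^2)$, $t$ and $\scalar_{(\ngo,\,g'|_\ngo)}$, sharp exactly when $g'|_\ngo$ is the nilsoliton and $P=\sym(A)$. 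For the last assertion of the theorem, equality $F(g')=F(g)$ then forces $g'$ to correspond to $(g|_\ngo,A)$, i.e.\ to be a solvsoliton, and the uniqueness of solvsolitons (\cite[Section 5]{solvsolitons}, \cite[Corollary 4.3]{BhmLfn}) gives $g'=c'\vp^*g$.

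The hardest point, as indicated, is the coupled optimisation: moving $g'|_\ngo$ off the nilsoliton changes simultaneously $\Ricci_{(\ngo,\,g'|_\ngo)}$, the admissible cone of symmetric derivations $P$, and — through $N$ — the off‑diagonal block $\langle\Ricci_{g'}Y',e_k\rangle$, so the extra terms that vanish in the almost‑abelian case must be handled in tandem. Controlling $t\,\langle\Ricci_{(\ngo,\,g'|_\ngo)},P\rangle$ and $2\sum_k\tr(P\,\ad_\ngo e_k)^2$ uniformly in $g'|_\ngo$, while retaining enough rigidity to pin down the equality case, is where the real work lies.
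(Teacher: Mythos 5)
Your framing coincides with the paper's (orthogonal splitting $\sg=\RR Y'\oplus\ngo$, the block formula for $\Ricci$, nilsoliton maximality on $\ngo$ as an input, uniqueness of solvsolitons for the rigidity clause), but the argument stops precisely where you say ``the real work lies'', and the two ideas that make that work tractable are absent. First, no minimization over the skew part $N$ of $A'$ and no control of the couplings you single out is needed: the off-diagonal Ricci block, the term $\unc|[A',(A')^t]|^2$ and all cross terms combine into a single non-negative contribution to $|\Ricci_{g'}|^2$ that vanishes when $A'$ is normal (one checks $\langle S(A'),[A',(A')^t]\rangle=0$ always, and $\tr\big(\Ricci_{(\ngo,g'|_\ngo)}S(A')\big)=0$, $\langle \Ricci_{(\ngo,g'|_\ngo)},[A',(A')^t]\rangle\geq 0$ whenever $A'$ is a derivation of the nilpotent bracket). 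Since these terms appear only in the denominator of $F$ and are zero at the solvsoliton, they can simply be discarded to get an upper bound for $F$ that is still attained at $g$; this is what collapses your ``coupled optimization'' to an inequality in $\tr S(A')^2$, $\tr A'$ and $|\Ricci_{(\ngo,g'|_\ngo)}|$ alone.

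Second, and more seriously, you never establish the lower bound $\tr S(A')^2\geq \tr S(A)^2$ (at fixed normalization of the nilpotent bracket), without which the resulting scalar inequality is simply false: the paper's reduced function $f(x,c)$ satisfies $f(x,1)>f(x_0,1)$ for $x$ slightly below $x_0$ whenever $\tr A\neq 0$, so the constraint $x\geq x_0$ is essential and cannot be waved away. This bound is where the structure of $\ngo$ genuinely enters: since $A$ is normal, $A^t\in\Der(\ngo)$, so $A$ is block-diagonal with respect to the descending central series of $\ngo$, while $\ad_\ngo{(h^{-1}X)}$ is strictly block-triangular; hence $A$ lies in the closure of the $\Gl(\ngo)$-conjugation orbit of $A-\ad_\ngo{(h^{-1}X)}$ and, being a minimal vector for the moment map $[A,A^t]/|A|^2$, satisfies $|h(A-\ad_\ngo{(h^{-1}X)})h^{-1}|\geq |A|$, with equality iff the conjugate is normal (this equality characterization is also what you need to pin down the equality case). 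Finally, even after these reductions the remaining optimization over the scale $c$ of $Y'$ and $x=\unm|A'|^2$ is not elementary in the Cauchy--Schwarz sense: the paper must run a discriminant analysis of a quadratic in $x$ with $c$-dependent coefficients (Lemma \ref{fAn}). In short, the proposal is a correct set-up with the central estimates unproven.
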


\begin{theorem}\label{Ai-thm}
Let $\sg$ be a solvable Lie algebra with abelian nilradical $\ngo$ and assume that $S$ admits a solvsoliton $g$.  If $\sg=\ag\oplus\ngo$ is orthogonal with respect to $g$, then the solvsoliton $g$ is a global maximum for the functional $F$ restricted to the set of all left-invariant metrics on $S$ such that $\ag\perp\ngo$.  Moreover, any other global maximum $g'$ for $F$ on such a set is also a solvsoliton (i.e.\ $g'=c\vp^*g$, for some $c>0$ and $\vp\in\Aut(S)$). 
\end{theorem}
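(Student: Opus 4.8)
The plan is to use the moving-bracket description of left-invariant metrics on solvable Lie groups, as in \cite{einsteinsolv,RP}. Fix on $\sg=\ag\oplus\ngo$ the inner product $\ip$ of the solvsoliton $g$, so that $\ag\perp\ngo$. Since $\ngo$ is abelian and $[\ag,\ag]=0$ by the structure of solvsolitons (\cite[Theorem 4.8]{solvsolitons}), every left-invariant metric on $S$ with $\ag\perp\ngo$ is, up to scaling and pullback by an automorphism of $S$, isometric to the metric on $S$ determined by a bracket $\mu$ on $(\sg,\ip)$ which is entirely encoded by the linear map $\theta=\theta_{\mu}\colon\ag\to\glg(\ngo)$, $\theta(X)=\ad_{\mu}X|_{\ngo}$, whose image is a commutative subalgebra of $\glg(\ngo)$ (all that Jacobi requires here) and with $\theta(X)$ non-nilpotent for $X\neq0$ (so that $\ngo$ is exactly the nilradical). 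The group $G=\Gl(\ag)\times\Gl(\ngo)$ acts by $(A,C)\cdot\theta(X)=C\,\theta(A^{-1}X)\,C^{-1}$, and the metrics in our class correspond precisely to the $G$-orbit of the solvsoliton bracket $\theta_{0}$, all of whose operators $\theta_{0}(X)$ are \emph{normal} with respect to $\ip|_{\ngo}$, again by \cite[Theorem 4.8]{solvsolitons} (a commuting family is simultaneously normalizable iff all its members are semisimple). As $F$ is invariant under scaling and under $\Or(\ag)\times\Or(\ngo)$, it suffices to prove $F(\mu)\le F(\mu_{0})$ on this $G$-orbit, with equality forcing $\mu$ into the $\RR^{\times}(\Or(\ag)\times\Or(\ngo))$-orbit of $\mu_{0}$.

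First one writes $F$ explicitly. From $\Ricci_{\mu}=\Mm_{\mu}-\unm S(\ad_{\mu}H_{\mu})$, with $H_{\mu}$ the mean-curvature vector ($\la H_{\mu},\cdot\ra=\tr\ad_{\mu}(\cdot)$) and $S$ the symmetric part, one computes that $H_{\mu}\in\ag$ with $\la H_{\mu},e_{a}\ra=\tr\theta(e_{a})$ for an $\ip|_{\ag}$-orthonormal basis $\{e_{a}\}$, that $\Ricci_{\mu}$ is block-diagonal for $\ag\oplus\ngo$ with $\la\Ricci_{\mu}e_{a},e_{b}\ra=-\unm\la\theta(e_{a}),\theta(e_{b})\ra$ (Hilbert--Schmidt) and $\Ricci_{\mu}|_{\ngo}=\unm\sum_{a}[\theta(e_{a}),\theta(e_{a})^{t}]-\unm S(\theta(H_{\mu}))$, where $\theta(H_{\mu})=\sum_{a}(\tr\theta(e_{a}))\theta(e_{a})$. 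Hence, up to a harmless normalization,
\begin{align*}
\scalar(\mu)&=-\unm\Bigl(\sum_{a}|\theta(e_{a})|^{2}+\sum_{a}(\tr\theta(e_{a}))^{2}\Bigr),\\
|\Ricci_{\mu}|^{2}&=\tfrac14\sum_{a,b}\la\theta(e_{a}),\theta(e_{b})\ra^{2}+\tfrac14\Bigl|\sum_{a}[\theta(e_{a}),\theta(e_{a})^{t}]-S(\theta(H_{\mu}))\Bigr|^{2},
\end{align*}
so $F(\mu)$ is an explicit ratio of quartics in the commuting family $\theta(e_{1}),\dots,\theta(e_{k})$, the term $S(\theta(H_{\mu}))$ being the only one coupling all of them.

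I would then maximize in two stages. \emph{Stage one (the $\Gl(\ngo)$-direction):} for fixed $A$, the operators $\theta(e_{a})=C\,\theta_{0}(A^{-1}e_{a})C^{-1}$ are simultaneous $C$-conjugates of a normal commuting family, and one shows that $F$ does not increase as $C$ moves away from being orthogonal, reducing the problem to $\theta$ normal, i.e.\ $[\theta(e_{a}),\theta(e_{a})^{t}]=0$. \emph{Stage two (the $\Gl(\ag)$-direction):} for normal families, setting $M=(A^{t}A)^{-1}\in\sym^{+}(\ag)$ turns the displayed quantities into $\sum_{a}|\theta(e_{a})|^{2}=\tr(MP)$, $\sum_{a}(\tr\theta(e_{a}))^{2}=\la Mv,v\ra$, $\sum_{a,b}\la\theta(e_{a}),\theta(e_{b})\ra^{2}=\tr((MP)^{2})$, and $\theta(H_{\mu})$ a fixed linear function of $Mv$, where $P$ is the Gram matrix of $\{\theta_{0}(e_{a})\}$ (positive definite since $\ngo$ is exactly the nilradical) and $v_{a}=\tr\theta_{0}(e_{a})$; thus $F$ becomes $\scalar(M)^{2}/|\Ricci(M)|^{2}$ with $\scalar$ linear in $M$ and $|\Ricci|^{2}$ a positive-definite quadratic in $M$. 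The solvsoliton equation $\Ricci_{\mu_{0}}=cI+D$, $D\in\Der(\sg)$, implies that $M=I$ is a critical point of this ratio, and positive-definiteness then forces it to be the unique maximum up to scaling (a Rayleigh-quotient argument). This yields $F(\mu)\le F(\mu_{0})$ with equality only on the $\RR^{\times}(\Or(\ag)\times\Or(\ngo))$-orbit of $\mu_{0}$; together with the uniqueness of solvsolitons (\cite[Section 5]{solvsolitons}, \cite[Corollary 4.3]{BhmLfn}) this identifies any other maximum as $c\vp^{*}g$.

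The main obstacle is stage one. There, $|\theta(e_{a})|^{2}=\tr(N_{a}Z^{-1}N_{a}^{t}Z)$ and the commutators $[\theta(e_{a}),\theta(e_{a})^{t}]$ depend on the distortion $Z=C^{t}C\in\sym^{+}(\ngo)$ in a way that is neither linear nor quadratic, so the clean Rayleigh-quotient reasoning of stage two is unavailable; one needs a genuine convexity / real-GIT (Kempf--Ness type) argument showing that the normal representative is distinguished and that moving away from it --- while the off-diagonal part $\sum_{a}[\theta(e_{a}),\theta(e_{a})^{t}]$ of $\Ricci_{\mu}|_{\ngo}$ switches on and interacts with the coupling term $S(\theta(H_{\mu}))$ --- can only decrease $F$, together with the rigidity that equality forces every $[\theta(e_{a}),\theta(e_{a})^{t}]$ to vanish. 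The term $-\unm S(\ad_{\mu}H_{\mu})$ is exactly the mean-curvature correction that is uncontrolled in the general non-unimodular case, and the hypotheses that $\ngo$ is abelian and $\ag\perp\ngo$ are what reduce $F$ to a ratio of quartics in a commuting family, thereby making this step feasible.
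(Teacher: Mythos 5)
Your framework coincides with the paper's starting point: fix the solvsoliton inner product, encode every metric in the class by the $\Gl(\ag)\times\Gl(\ngo)$-orbit of the commuting family $A_i=\ad{Y_i}|_{\ngo}$, and write $F$ as an explicit ratio of quartics (modulo some constants: in the paper's normalization the mean-curvature term enters $\Ricci|_{\ngo}$ as $-\sum(\tr{A_i})S(A_i)$, with no extra $\unm$, and the $\ag$-block is $-\tr{S(A_i)S(A_j)}$, which is not $-\unm\la A_i,A_j\ra$ in general). The genuine gap is the step you yourself call ``the main obstacle'': stage one, the claim that for each fixed $h_1\in\Gl(\ag)$ the functional does not increase as $h_2$ leaves $\Or(\ngo)$, together with the rigidity that equality forces every commutator $[A_i,A_i^t]$ to vanish. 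No argument is given for this, and it is the heart of the matter; it is also a stronger statement than what is actually needed, and a Kempf--Ness argument for a whole commuting family interacting with the coupling term $S(\ad{H})$ is not something you can wave at. Stage two (the Rayleigh-quotient argument in $M=(h_1^th_1)^{-1}$ for normal families) is plausible, but without stage one the proof does not close.

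The paper's proof shows that the two-stage optimization can be avoided entirely. After normalizing so that $\tr{S(A_i)S(A_j)}=\delta_{ij}$, $\tr{A_2}=\dots=\tr{A_r}=0$ and, for the competitor, $\tr{h_1^{-1}A_1}=\tr{A_1}$, one simply discards the nonnegative term $\tfrac14\bigl|\sum[A_i',(A_i')^t]\bigr|^2$ from $|\Ricci|^2$ and bounds the Gram-matrix term below by its diagonal, $\sum_{i,j}\bigl(\tr{S(A_i')S(A_j')}\bigr)^2\geq\sum_i x_i^2$ with $x_i:=\tr{S(A_i')^2}$. This traps $F$ below the scalar function $f(x_1,\dots,x_r)=(x_1+\dots+x_r+a)^2/(x_1^2+\dots+x_r^2+ax_1)$, $a=(\tr{A_1})^2$. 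The only place where anything like your GIT input is used is a single application of the minimal-vector inequality $|h_2Bh_2^{-1}|\geq|B|$ to the one normal operator $B=h_1^{-1}A_1$ carrying the trace, which yields $x_1\geq 1$; the remaining $x_i$ need only be positive. An elementary inequality (Lemma \ref{fAi}, proved by Cauchy--Schwarz and completing squares) then gives $f(x)\leq f(1,\dots,1)=r+a=F(g)$ with equality iff all $x_i=1$, and the equality analysis recovers the solvsoliton conditions. So the difficulty you correctly identified is real, but the published argument sidesteps it with a coarser, purely scalar estimate rather than solving it.
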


The proofs of these theorems will be given in Sections \ref{An-sec} and \ref{Ai-sec}, respectively.

\section{The functional $F$ on rank-one solvmanifolds}\label{An-sec} 

Let $S$ be a solvable Lie group of dimension $n$ such that the nilradical $\ngo$ of its Lie algebra $\sg$ is non-abelian and has dimension $n-1$.  If we fix a decomposition $\sg=\RR Y\oplus\ngo$, then the Lie bracket of $\sg$ is determined by the pair $(A,\lb_\ngo)$, where $A:=\ad{Y}|_{\ngo}\in\Der(\ngo)$ and $\lb_\ngo$ is the Lie bracket of $\ngo$.  

By fixing in addition an inner-product $\ip$ on $\sg$ such that $Y\perp\ngo$ and $|Y|=1$, each pair $(A,\lb_\ngo)$ is identified with the corresponding solvable Lie group $S_{(A,\lb_\ngo)}$ endowed with the left-invariant metric defined by $\ip$.  The space of such pairs therefore covers, up to isometry, all left-invariant metrics on Lie groups with a codimension-one nilradical.  Indeed, more precisely, the left-invariant metric $\la\overline{h}\cdot,\overline{h}\cdot\ra$ on the Lie group $S_{(A,\lb_\ngo)}$, where, with respect to a fixed orthonormal basis $\{ Y, X_1,\dots,X_{n-1}\}$ of $(\sg,\ip)$,  
$$
\overline{h}:=\left[\begin{array}{c|ccc}
c^{-1} & &0& \\ \hline &\\ 
X&&h& \\&
\end{array}\right], \qquad c\in\RR^*, \quad X\in\ngo,  \quad h\in\Gl(\ngo), 
$$
is isometric to the pair $(ch(A-\ad_\ngo{h^{-1}X})h^{-1},h\cdot\lb_\ngo)$, since $\overline{h}$ is an isomorphism between the corresponding Lie algebras.    

It follows from \cite[(25)]{solvsolitons} that the Ricci curvature of $(A,\lb_\ngo)$ is given by,
\begin{equation}\label{ricAn}
\Ricci = \left[\begin{array}{c|c}
-\tr{S(A)^2} & \ast \\ \hline &\\ 
\ast&\Ricci_{\lb_\ngo}+\unm[A,A^t]-(\tr{A})S(A) \\&
\end{array}\right],  
\end{equation}
where $S(A):=\unm(A+A^t)$ and $\Ricci_{\lb_\ngo}$ denotes de Ricci operator of the nilmanifold $(\lb_\ngo,\ip|_{\ngo\times\ngo})$.  If $|\lb_\ngo|=2$, which can be assumed up to scaling, then $\tr{\Ricci_{\lb_\ngo}}=-1$ and so
\begin{equation}\label{FAn}
F(A,\lb_\ngo) = \frac{\left(\tr{S(A)^2}+(\tr{A})^2+1\right)^2}{\tr{S(A)^2}\left(\tr{S(A)^2}+(\tr{A})^2\right) + |\Ricci_{\lb_\ngo}|^2 + \unc|[A,A^t]|^2 + G(A,\lb_\ngo)},
\end{equation}
for some expression $G(A,\lb_\ngo)\geq 0$ that vanishes if $A$ is normal.  

Let us suppose that $(A,\lb_\ngo)$ is a solvsoliton with $|\lb_\ngo|=2$, i.e.\ $\Ricci+|\Ricci_{\lb_\ngo}|^2I\in\Der(\sg)$, which is equivalent by \cite[Theorem 4.8]{solvsolitons} to
\begin{equation}\label{soliton-cond}
[A,A^t]=0, \qquad \tr{S(A)^2}=|\Ricci_{\lb_\ngo}|^2, \qquad \Ricci_{\lb_\ngo}+|\Ricci_{\lb_\ngo}|^2I\in\Der(\lb_\ngo). 
\end{equation}
Consider $(ch(A-\ad_\ngo{h^{-1}X})h^{-1},h\cdot\lb_\ngo)$, where $c\ne 0$, $X\in\ngo$, $h\in\Gl(\ngo)$, and assume (up to scaling) that $|h\cdot\lb_\ngo|=2$.  In order to prove Theorem \ref{An-thm}, it is therefore enough to show that
\begin{equation}\label{ineq-An}
F(ch(A-\ad_\ngo{h^{-1}X})h^{-1},h\cdot\lb_\ngo)\leq F(A,\lb_\ngo).
\end{equation}

Let $\ngo=\ngo_1\oplus\dots\oplus\ngo_k$ be the orthogonal decomposition such that $[\ngo,\ngo]_\ngo=\ngo_2\oplus\dots\oplus\ngo_k$ and so on with the rest of the descending central series.  Since $A$ is normal, $A^t$ is also a derivation of $\ngo$ and thus relative to this decomposition,  
$$
A = 
\left[\begin{array}{ccc} \begin{array}{l|}A_1\\ \hline\end{array}&0&0\\  0&\ddots&0\\ 0&0&\begin{array}{|r} \hline A_k\end{array} \end{array}\right], \qquad 
\ad_\ngo{h^{-1}X}= 
\left[\begin{array}{ccc} \begin{array}{l|}0\\ \hline\end{array}&0&0\\  \ast&\ddots&0\\ \ast&\ast&\begin{array}{|r} \hline 0\end{array} \end{array}\right],
$$
where the blocks correspond to each $\ngo_i$.   This implies that $A$ belongs to the closure of the conjugation class of $A-\ad_\ngo{h^{-1}X}$ (by conjugating with matrices which are multiples of the identity on each block), and so 
$$
|h(A-\ad_\ngo{h^{-1}X})h^{-1}|\geq |A|,
$$ 
where equality holds if and only if $h(A-\ad_\ngo{h^{-1}X})h^{-1}$ is normal.  Indeed, recall that $A$ is normal and so it is a minimal vector since the moment map for the conjugation $\Gl(\ngo)$-action on $\glg(\ngo)$ is given by $m(A)=[A,A^t]/|A|^2$ (see \cite{RchSld,HnzSchStt}).  On the other hand, 
$$
\tr{S(h(A-\ad_\ngo{h^{-1}X})h^{-1})^2}=\unm|h(A-\ad_\ngo{h^{-1}X})h^{-1}|^2+\unm\tr{A^2},
$$ 
and we also know that 
$|\Ricci_{h\cdot\lb_\ngo}|\geq|\Ricci_{\lb_\ngo}|$, by using that $\lb_\ngo$ is a nilsoliton (see \cite[Section 3.1]{RP}).  It therefore follows from \eqref{FAn} that 
\begin{align}
F(ch(A-\ad_\ngo{h^{-1}X})h^{-1},h\cdot\lb_\ngo) 
\leq& \frac{\left(c^2T+c^2(\tr{A})^2+1\right)^2}{c^4T\left(T+(\tr{A})^2\right) + |\Ricci_{\lb_\ngo}|^2} \label{FAn2} \\ 
\leq& \frac{\left(c^2(x+a)+1\right)^2}{c^4(x+b)(x+a)+x_0+b}=:f(x,c), \notag
\end{align}
where $T:=\tr{S(h(A-\ad_\ngo{h^{-1}X})h^{-1})^2}$ and
$$
x:=\unm|h(A-\ad_\ngo{h^{-1}X})h^{-1}|^2\geq\unm|A|^2=:x_0, \quad b:=\unm\tr{A^2}, \quad a:=b+(\tr{A})^2.
$$
Note that $x_0+b=|\Ricci_{\lb_\ngo}|^2>0$ and that the value of $F$ at the solvsoliton is given by 
$$
F(A,\lb_\ngo) = f(x_0,1) = \frac{x_0+a+1}{x_0+b}. 
$$

\begin{lemma}\label{fAn}
$f(x,c)\leq f(x_0,1)$ for any $x\geq x_0$ and $c\in\RR$, where equality holds if and only if $x=x_0$ and $c=\pm 1$.  
\end{lemma}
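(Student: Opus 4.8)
The plan is to optimize in two stages: first over $c$ for fixed $x\geq x_0$, then over $x$. Since $f(x,\cdot)$ depends on $c$ only through $t:=c^2\geq 0$, I would put
\[
\phi_x(t):=\frac{\big(t(x+a)+1\big)^2}{t^2(x+b)(x+a)+(x_0+b)},
\]
and begin by recording the sign facts that make the manipulations legitimate: $x_0+b=|\Ricci_{\lb_\ngo}|^2>0$ by \eqref{soliton-cond}, and $a-b=(\tr A)^2\geq 0$, so that $0<x_0+b\leq x+b\leq x+a$ for every $x\geq x_0$; in particular every denominator below is strictly positive.

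\emph{Stage 1.} I would differentiate $\phi_x$ and check that the numerator of $\phi_x'(t)$ factors as $2(x+a)\big(t(x+a)+1\big)\big((x_0+b)-t(x+b)\big)$, so that for $t\geq 0$ the sign of $\phi_x'(t)$ is that of $(x_0+b)-t(x+b)$. Hence $\phi_x$ increases on $[0,t^*]$ and decreases on $[t^*,\infty)$, where $t^*:=\frac{x_0+b}{x+b}$, and substituting $t=t^*$ and simplifying should yield the clean identity
\[
\max_{c\in\RR}f(x,c)=\phi_x(t^*)=\frac{x+a}{x+b}+\frac{1}{x_0+b}.
\]

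\emph{Stage 2.} Writing $\frac{x+a}{x+b}=1+\frac{a-b}{x+b}$ and using $a-b\geq 0$ together with $0<x_0+b\leq x+b$, I get $\frac{x+a}{x+b}\leq 1+\frac{a-b}{x_0+b}=\frac{x_0+a}{x_0+b}$, and therefore
\[
f(x,c)\leq\frac{x+a}{x+b}+\frac{1}{x_0+b}\leq\frac{x_0+a}{x_0+b}+\frac{1}{x_0+b}=\frac{x_0+a+1}{x_0+b}=f(x_0,1),
\]
which is the asserted inequality.

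For the equality clause, equality in $f(x,c)\leq f(x_0,1)$ forces equality in both stages simultaneously, i.e.\ $c^2=t^*$ and $(a-b)(x-x_0)=0$. In the reduction used for Theorem \ref{An-thm} one may take $S$ non-unimodular (the unimodular case being covered by \cite{RP}), so $\tr A\neq 0$, hence $a-b=(\tr A)^2>0$; this forces $x=x_0$, and then $t^*=1$, i.e.\ $c=\pm 1$. The part I expect to need the most care is the bookkeeping in Stage 1 — verifying that $\phi_x(t^*)$ really collapses to $\frac{x+a}{x+b}+\frac1{x_0+b}$, and that the monotonicity argument uses only the positivity already supplied by $x_0+b=|\Ricci_{\lb_\ngo}|^2>0$ and $x\geq x_0$; everything else is routine algebra.
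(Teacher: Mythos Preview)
Your argument is correct and is cleaner than either proof in the paper. The paper's primary proof is purely algebraic: it rewrites $f(x,c)\leq f(x_0,1)$ as $p(x)=rx^2+sx+t\geq 0$ with $r=c^4(a-b+1)>0$, checks $p(x_0)\geq 0$ via a direct computation at $x=x_0$, and then, in the remaining case $p'(x_0)<0$, shows the discriminant $s^2-4rt$ is negative (this last step uses $a>b$ as you do). The paper also sketches an alternative analytic argument closer to yours, locating the critical curve $c^2(x+b)=x_0+b$ and bounding the critical values, but it appeals to limits at infinity and a compactness argument rather than your explicit closed form $\phi_x(t^*)=\dfrac{x+a}{x+b}+\dfrac{1}{x_0+b}$. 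Your two-stage optimization is more transparent: the factorization of $\phi_x'(t)$ immediately yields the unique maximizer in $t$, and then Stage~2 is a one-line monotonicity in $x$. The paper's algebraic route, by contrast, avoids calculus entirely but at the cost of a discriminant computation.

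On the equality clause you are right to flag the hypothesis $a>b$: when $a=b$ the function $f$ depends on $(x,c)$ only through $c^2(x+b)$, so equality holds along the whole curve $c^2(x+b)=x_0+b$, and the ``only if'' direction of the lemma as stated fails. The paper's main proof also uses $a>b$ (to deduce $c^2<1$ from \eqref{fAn-eq}), so this is an implicit standing assumption in the section rather than a gap peculiar to your approach; your reduction of the unimodular case $\tr A=0$ to \cite{RP} is the correct way to dispose of it.
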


\begin{proof}
We first note that if we consider the denominator of $f(x,c)$, given by the parabola
$$
q(x):=c^4(x+b)(x+a)+x_0+b,
$$ 
then $q(x_0)>0$ and $q'(x_0)>0$, from which follows that $q(x)>0$ for any $x\geq x_0$.  It follows that 
\begin{equation}\label{fAn-eq0}
f(x_0,c)\leq f(x_0,1), \qquad \forall c\in\RR,
\end{equation}
where equality holds if and only if $c=\pm 1$, as this is equivalent to 
$$
c^4(x_0+a)^2+1+2c^2(x_0+a) \leq (x_0+a+1)(c^4(x_0+a)+1),
$$
which simplifies to $0\leq (x_0+a)(c^2-1)^2$.  

On the other hand, it is straightforward to show that inequality $f(x,c)\leq f(x_0,1)$ can be written as
$$
p(x) = rx^2+sx+t\geq 0, \quad \forall x\geq x_0, 
$$
where $r=c^4(a-b+1)>0$, $s=c^4\Big(a(a-b+1)-x_0(a-b)+b\Big)-2c^2(x_0+b)$ and 
$$
t=ac^4(x_0(b-a)+b)-2ac^2(x_0+b)+(x_0+a)(x_0+b).  
$$
It follows from \eqref{fAn-eq0} that $p(x_0)\geq 0$, where equality holds if and only if $c=\pm 1$, if $p'(x_0)\geq 0$, then the lemma follows.   One can therefore assume that $p'(x_0)=2rx_0+s<0$, that is,
\begin{equation}\label{fAn-eq}
c^4(a-b)(x_0+a) < -c^4(x_0+a) + c^2(2-c^2)(x_0+b), 
\end{equation}  
which is easily seen to imply that $c^2<1$ by using that $a>b$.  A straightforward computation gives that the discriminant of $p$ equals
$$
s^2-4rt = c^4(a-b)(x_0+a+1)\Big(c^4(a-b)(x_0+a+1) + 4(c^2-1)(x_0+b)\Big), 
$$
and so it is negative since by \eqref{fAn-eq} and the fact that $c^2<1$, the factor on the right is smaller than  
$$
-c^4(x_0+a) + c^2(2-c^2)(x_0+b) + c^4(a-b) + 4(c^2-1)(x_0+b) =  -2(x_0+b)(c^4 -3c^2 +2) <0.
$$
Thus $p$ is always positive, concluding the proof.  

Alternatively, a simple analytic argument using the partial derivatives of $f$ gives that $(x_0,1)$ is a local maximum of the function $f$ on the half plane $\{ (x,c):x\geq x_0\}$ and that the only critical points of $f$ in this region are $\{ (x,c):c^2(x+b)=d\}$, with critical values, 
$$
f(x,c) =  \frac{\left(c^2(x+a)+1\right)^2}{c^2d(x+a)+d} = \frac{c^2(x+a)+1}{d} = \frac{d+\frac{d}{x+b}(a-b)+1}{d} \leq f(x_0,1).  
$$
Since 
$$
\lim_{x\to\infty} f(x,c) = 1 < f(x_0,1), \qquad \lim_{c\to\infty} f(x,c) = \frac{x+a}{x+b} \leq 1+\frac{a-b}{d} < f(x_0,1), 
$$
there are positive numbers $M_1,M_2$ such that the value of $f$ out of the compact region $\{ (x,c):x_0\leq x\leq M_1, \; |c|\leq M_2\}$ is always strictly less than $f(x_0,1)$.  All this implies that $(x_0,\pm 1)$ are actually the only global maxima of $f$ on the half plane $\{ (x,c):x\geq x_0\}$, as desired.  
\end{proof}

Inequality \eqref{ineq-An} therefore follows from \eqref{FAn2} and the inequality given in Lemma \ref{fAn}.  On the other hand, if equality holds in \eqref{FAn2} and Lemma \ref{fAn}, then $h(A-\ad_\ngo{h^{-1}X})h^{-1}$ is normal, $|h(A-\ad_\ngo{h^{-1}X})h^{-1}|=|A|$, $c=\pm 1$ and $|\Ricci_{h\cdot\lb_\ngo}|^2= |\Ricci_{\lb_\ngo}|^2$.  This implies that
\begin{align*}
\tr{S(ch(A-\ad_\ngo{h^{-1}X})h^{-1})^2}=&\unm |h(A-\ad_\ngo{h^{-1}X})h^{-1}|^2+\unm\tr{A^2}=\tr{S(A)^2}\\ =&|\Ricci_{\lb_\ngo}|^2=|\Ricci_{h\cdot\lb_\ngo}|^2,
\end{align*}
and it follows from \cite[Section 3.1]{RP} that $h\cdot\lb_\ngo$ is a nilsoliton.  Thus $h\cdot\lb_\ngo=\tilde{h}\cdot\lb_\ngo$ for some $\tilde{h}\in\Or(\ngo,\ip)$ by the uniqueness of nilsolitons (see \cite[Theorem 3.5]{soliton}) and hence 
$$
\Ricci_{h\cdot\lb_\ngo}+|\Ricci_{h\cdot\lb_\ngo}|^2I = \tilde{h}(\Ricci_{\lb_\ngo}+|\Ricci_{\lb_\ngo}|^2I)\tilde{h}^{-1} \in\Der(\tilde{h}\cdot\lb_\ngo)=\Der(h\cdot\lb_\ngo).  
$$
All this implies that $(ch(A-\ad_\ngo{h^{-1}X})h^{-1},h\cdot\lb_\ngo)$ is a solvoliton (see \eqref{soliton-cond}), concluding the proof of Theorem \ref{An-thm}.

\section{The functional $F$ on solvmanifolds with an abelian nilradical}\label{Ai-sec} 

In this section, we consider a solvable Lie group $S$ of dimension $n$ such that the nilradical $\ngo$ of its Lie algebra $\sg$ is abelian, say with $\dim{\ngo}=n-r$.  After fixing a decomposition $\sg=\ag\oplus\ngo$ and a basis $\{ Y_i\}$ of $\ag$, the Lie bracket of $\sg$ is determined by an $r$-tuple $(A_1,\dots,A_r)$ of linearly independent linear operators of $\ngo$ such that $[A_i,A_j]=0$ for all $i,j$, where $A_i:=\ad{Y_i}|_{\ngo}\in\glg(\ngo)$, and a bilinear map $\lambda:\ag\times\ag\rightarrow\ngo$.  We assume that $S$ admits a solvsoliton, hence $\lambda=0$ for the corresponding orthogonal decomposition $\sg=\ag\oplus\ngo$ (see \cite[Theorem 4.8]{solvsolitons}).  

By fixing an inner-product $\ip$ on $\sg$ such that $\ag\perp\ngo$ and $\{ Y_i\}$ is orthonormal, each $(A_1,\dots,A_r)$ is identified with the corresponding solvable Lie group $S_{(A_1,\dots,A_r)}$ endowed with the left-invariant metric defined by $\ip$.  It is easy to see that any left-invariant metric on the Lie group $S_{(A_1,\dots,A_r)}$ for which $\ag\perp\ngo$ is isometric to some 
\begin{equation}\label{metr}
\left(h_2(h_1^{-1}A_1)h_2^{-1},\dots,h_2(h_1^{-1}A_r)h_2^{-1}\right), \qquad 
h_1\in\Gl(\ag), \quad h_2\in\Gl(\ngo),
\end{equation}
where $h_1^{-1}A_i:= \ad{h_1^{-1}Y_i} = \sum c_{ji}A_j$ if the matrix of $h_1^{-1}$ relative to $\{ Y_i\}$ is $[c_{ij}]$.  

It follows from \cite[(25)]{solvsolitons} that the Ricci curvature of $(A_1,\dots,A_r)$ is given by,
\begin{equation}\label{ricAi}
\Ricci = \left[\begin{array}{c|c}
R & 0 \\ \hline  
0&\unm\sum[A_i,A_i^t]-\sum(\tr{A_i})S(A_i) 
\end{array}\right],  \qquad R_{ij}=-\tr{S(A_i)S(A_j)}.
\end{equation}
Up to isometry, it can always be assumed that $\tr{A_2}=\dots=\tr{A_r}=0$ (i.e.\ $H=(\tr{A_1})Y_1$) by considering in \eqref{metr} $h_2=I$ and a suitable $h_1\in\Or(\ag,\ip)$.  In that case, 
\begin{equation}\label{FAi}
F(A_1,\dots,A_r) = \frac{\left(\sum\tr{S(A_i)^2}+(\tr{A_1})^2\right)^2}
{\sum\left(\tr{S(A_i)S(A_j)}\right)^2 + (\tr{A_1})^2\tr{S(A_1)^2} + \tfrac{1}{4}|\sum [A_i,A_i^t]|^2}.
\end{equation}

Let us suppose that $(A_1,\dots,A_r)$ is a solvsoliton, that is, $\tr{S(A_i)S(A_j)}=\delta_{ij}$ (up to scaling) and $A_i$ is normal for all $i$ (see \cite[Theorem 4.8]{solvsolitons}).  We consider an $r$-uple as in \eqref{metr} and assume (up to isometry and scaling) that $\tr{h_1^{-1}A_1}=\tr{A_1}$ and $\tr{h_1^{-1}A_2}=\dots=\tr{h_1^{-1}A_r}=0$ (i.e.\ $c_{11}=1$ and $c_{12}=\dots=c_{1r}=0$).  Thus what we must show to prove Theorem \ref{Ai-thm} is that
\begin{equation}\label{Ai-ineq}
F\left(h_2(h_1^{-1}A_1)h_2^{-1},\dots,h_2(h_1^{-1}A_r)h_2^{-1}\right)\leq F(A_1,\dots,A_r), 
\end{equation}
for any $h_1\in\Gl(\ag)$ and $h_2\in\Gl(\ngo)$.

By \eqref{FAi}, we have that 
\begin{align}
F\left(h_2(h_1^{-1}A_1)h_2^{-1},\dots,h_2(h_1^{-1}A_r)h_2^{-1}\right) \leq&  \frac{(x_1+\dots+x_r+a)^2}{x_1^2+\dots+x_r^2+ax_1}=:f(x_1,\dots,x_r), \label{FAi2}
\end{align}
where 
$$
x_i:=\tr{S(h_2(h_1^{-1}A_i)h_2^{-1})^2}, \qquad a:=(\tr{A_1})^2.  
$$
Since $h_1^{-1}A_1=A_1+c_{21}A_2+\dots+c_{r1}A_r$ is normal, it follows from \cite[(17)]{RP} that
\begin{align*}
x_1 =& \unm\tr{(h_1^{-1}A_1)^2} + \unm|h_2(h_1^{-1}A_1)h_2^{-1}|^2 \geq  \unm\tr{(h_1^{-1}A_1)^2} + \unm|h_1^{-1}A_1|^2 \\ 
=& \tr{S(h_1^{-1}A_1)^2} = 1 + \sum_{i=2}^r c_{i1}^2 \geq 1.
\end{align*}
Note that the value of $F$ at the solvsoliton is given by 
$$
F(A_1,\dots,A_r)) = f(1,\dots,1) = r+a. 
$$

\begin{lemma}\label{fAi}
$f(x_1,\dots,x_r)\leq f(1,\dots,1)$ for any $x_1\geq 1$, $x_2,\dots,x_r>0$, where equality holds if and only if $x_1=\dots=x_r=1$.  
\end{lemma}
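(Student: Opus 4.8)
The plan is to clear denominators and produce an explicit sum-of-squares decomposition. First I would note that on the region $\{x_1\geq 1,\ x_2,\dots,x_r>0\}$ the denominator $x_1^2+\dots+x_r^2+ax_1$ of $f$ is strictly positive (recall $a=(\tr A_1)^2\geq 0$), so the claimed inequality $f(x_1,\dots,x_r)\leq f(1,\dots,1)=r+a$ is equivalent to
$$
P(x_1,\dots,x_r):=(r+a)\big(x_1^2+\dots+x_r^2+ax_1\big)-\big(x_1+\dots+x_r+a\big)^2\ \geq\ 0 .
$$

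The core of the proof is the algebraic identity
$$
P(x_1,\dots,x_r)=\sum_{1\leq i<j\leq r}(x_i-x_j)^2\ +\ a\sum_{i=1}^{r}(x_i-1)^2\ +\ a(r+a)(x_1-1),
$$
which I would verify by a direct expansion. The systematic way to find and check it is to first use the classical identity $r\sum_i x_i^2-\big(\sum_i x_i\big)^2=\sum_{i<j}(x_i-x_j)^2$ to absorb the part of $P$ that does not involve $a$, which leaves exactly the leftover $a\big(\sum_i x_i^2+(r+a)x_1-2\sum_i x_i-a\big)$; completing the square in the $x_i$ then rewrites this remainder precisely as $a\sum_i(x_i-1)^2+a(r+a)(x_1-1)$.

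Granting the identity, the lemma follows at once: since $a\geq 0$ and $x_1\geq 1$ by hypothesis, each of the three summands on the right-hand side is nonnegative, hence $P\geq 0$, which is the asserted inequality. For the equality statement, $P=0$ forces all three summands to vanish; when $a>0$ the middle summand $a\sum_i(x_i-1)^2$ already yields $x_1=\dots=x_r=1$, and this value obviously realizes equality. (If $a=0$, that is, $\sg$ is unimodular, $P=0$ only forces $x_1=\dots=x_r$, but that subcase of Theorem \ref{Ai-thm} is already contained in \cite{RP}.)

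I do not expect a genuine obstacle; the only delicate point is locating the right combination of squares, and the guiding principle is to dispose of the $a$-free part via the Cauchy--Schwarz identity above and only afterwards complete the square in the residual $a$-multiple. It is precisely the hypothesis $x_1\geq 1$ that makes the resulting linear remainder $a(r+a)(x_1-1)$ have the correct sign, which explains why that hypothesis cannot be dropped. Alternatively, one could imitate the analytic argument used for Lemma \ref{fAn} — locating the critical points of $f$ on the half-space $\{x_1\geq 1\}$ and controlling its behaviour at infinity — but the identity above is shorter and simultaneously settles the equality case.
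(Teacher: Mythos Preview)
Your proof is correct and follows essentially the same route as the paper: both clear denominators, peel off the $a$-free part via the identity $r\sum x_i^2-(\sum x_i)^2=\sum_{i<j}(x_i-x_j)^2$, and then control the residual $a$-multiple using $\sum(x_i-1)^2$ together with $x_1\geq 1$. Your decomposition is in fact a slightly tidier regrouping of the paper's three terms, and you are more careful about the $a=0$ boundary case, which the paper's proof implicitly assumes away.
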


\begin{proof}
An elementary algebraic manipulation gives that the inequality is equivalent to 
$$
0\leq \left((r-1)\sum x_i^2-\sum x_ix_j\right) + a^2(x_1-1) + a\left(\sum x_i^2-2\sum x_i +rx_1\right).  
$$
Since the first term is $\geq 0$ by the Cauchy-Schwartz inequality $(\sum x_i)^2\leq r\sum x_i^2$ and the third one is $\geq a\sum (x_i-1)^2$, one obtains that both the above inequality and the equality condition in the lemma follow.    
\end{proof}

Since $F$ is invariant under all the assumptions made above up to isometry and scaling, inequality \eqref{Ai-ineq} follows from \eqref{FAi2} and Lemma \ref{fAi}.  Moreover, if equality holds, then  $h_2(h_1^{-1}A_i)h_2^{-1}$ is normal for all $i$ and 
$$
\tr{S(h_2(h_1^{-1}A_i)h_2^{-1})S(h_2(h_1^{-1}A_j)h_2^{-1})}=\delta_{ij},
$$  
which implies that $\left(h_2(h_1^{-1}A_1)h_2^{-1},\dots,h_2(h_1^{-1}A_r)h_2^{-1}\right)$ is a solvsoliton, concluding the proof of Theorem \ref{Ai-thm}.


\begin{thebibliography}{MM}
\bibitem[BL]{BhmLfn} {\sc C. B\"ohm, R. Lafuente}, The Ricci flow on solvmanifolds of real type, {\it Adv. Math.}, in press (arXiv).

\bibitem[FC]{Frn} {\sc E. Fern\'andez-Culma}, Classification of $7$-dimensional Einstein Nilradicals, {\it Transf. Groups} {\bf 17} (2012), 639-656.

\bibitem[GJ]{GrdJbl} {\sc C. Gordon, M. Jablonski}, Einstein solvmanifolds have maximal symmetry, {\it J. Diff. Geom.}, in press (arXiv).  

\bibitem[HSS]{HnzSchStt} {\sc P. Heinzner, G. W. Schwarz, H. St\"otzel}, Stratifications with respect to actions of real reductive groups, {\it Compositio Math.} \textbf{144} (2008), 163-185.

\bibitem[J1]{Jbl2}  {\sc M. Jablonski}, Homogeneous Ricci solitons, {\it J. reine angew. Math.} {\bf 699} (2015), 159–-182.

\bibitem[J2]{Jbl3}  {\sc M. Jablonski}, Maximal symmetry and unimodular solvmanifolds, {\it Pacific J. Math.} {\bf 298} (2019), 417-427. 

\bibitem[L1]{soliton} {\sc J. Lauret}, Ricci soliton homogeneous nilmanifolds, {\it Math. Ann.} \textbf{319} (2001), 715-733.

\bibitem[L2]{cruzchica} {\sc J. Lauret},  Einstein solvmanifolds and nilsolitons, {\it Contemp. Math.} {\bf 491} (2009), 1-35.

\bibitem[L3]{solvsolitons} {\sc J. Lauret}, Ricci soliton solvmanifolds, {\it J. reine angew. Math.} {\bf 650} (2011), 1-21.

\bibitem[L4]{beta}  {\sc J. Lauret}, The beta operator of a homogeneous space revisited, in preparation.

\bibitem[LW1]{einsteinsolv}  {\sc J. Lauret, C.E. Will}, Einstein solvmanifolds: existence and non-existence questions, {\it Math. Annalen} {\bf 350} (2011), 199-225.

\bibitem[LW2]{RP}  {\sc J. Lauret, C.E. Will}, The Ricci pinching functional on solvmanifolds, {\it Quart. J. Math.}, in press (arXiv).  

\bibitem[RS]{RchSld} {\sc R.W. Richardson, P.J. Slodowy}, Minimum vectors for real reductive algebraic groups, {\it J. London Math. Soc. (2)} {\bf 42} (1990), 409-429.

\bibitem[W]{Wll} {\sc C.E. Will}, The space of solvsolitons in low dimensions, {\it Ann. Global Anal. Geom.} {\bf 40} (2011), 291-309.

\end{thebibliography}
\end{document}